\documentclass{article} 

\usepackage[all,cmtip]{xy}
\usepackage{amsmath}
\usepackage{amssymb}
\newtheorem{theorem}{Theorem}

\newtheorem{lemma}{Lemma}

\newtheorem{corollary}{Corollary}
\newenvironment{proof}{\paragraph{\ Proof:}}{\hfill$\square$}

\newcommand{\R}{\mathbb{R}}
\newcommand{\Z}{\mathbb{Z}}

\title{On a Conjecture of D. B. Shapiro} 

\author{ Jun Zhu \\
        \small junzhu1277@gmail.com \\        
}

\date{\today} 

\begin{document} 
    
	\maketitle 
    
    \section{Abstract} 
		We prove a conjecture stated in [4], which asserts that no $[10,10,16]_\Z$ formula 
	can arise as a restriction of any Hurwitz$-$Radon formula. Consequently, the unique 
	$[10,10,16]_\Z$ formula provides the first known example of a composition formula that cannot be 
	obtained from classical Hurwitz–Radon formulas by a process of restrictions and direct sums.

	\section{Introduction} 
		Given integers $r, s$ and $n$, a composition or a formula of size $[r,s,n]_\R$ is a square 
	identity of type
	\begin{equation}
	(x_1^2 + \cdots + x_r^2)(y_1^2 +\cdots + y_s^2) = z_1^2 + \cdots + z_n^2 
	\end{equation}
	where $x=(x_1, ..., x_r)$ and $y=(y_1, ..., y_s)$ are systems of indeterminates and each 
	$z_k = z_k(x,y)$ is a bilinear form in $x$ and $y$ with coefficients in $\R$. Given $r, s$, 
	$r*s$ is the smallest number of n such that there exists an $[r,s,n]_\R$ formula. Such a 
	formula is equivalent to a normed bilinear map $f: R^r \times R^s \rightarrow R^n$ satisfying
	\begin{equation}
	|f(x,y)|=|x| |y|, x \in \R^r, y \in \R^s
	\end{equation}
	
	Two $[r,s,n]_\R$ formulas are said to be equivalent if their associated normed bilinear maps $f,g$
	differ by orthogonal changes of coordinates, namely the following diagram is commutative:
	
	\[
	\xymatrix{
	\R^r \times \R^s \ar[r]^{g} \ar[d]_{\alpha \times \beta} & \R^n \ar[d]^{\gamma } \\
	\R^r \times \R^s \ar[r]^{f} & \R^n
	}
	\]
	where $\alpha, \beta$ and $\gamma$ are isometries.
	
	Replace $\R$ with $\Z$ (integers), then the $[r,s,n]_\Z$ formula is over integers and 
	each $z_k(x,y)$ is a bilinear form in x and y with coefficients in $\Z$. Given $r,s$, $r *_\Z s$ is the 
	smallest number of n such that there exists an $[r,s,n]_{\Z}$ formula. Since an $[r,s,n]_\Z$ formula 
	is also an $[r,s,n]_\R$ formula, we have $r *_\Z s \ge r *_\R s$.
	
	A matrix $M$ of size $r \times s$ is an intercalate matrix if:
	\begin{enumerate}
	\item All entries are nonnegative integers (called colors).	
	\item The colors along each row (resp. column) are distinct.	
	\item If $M(i, j ) = M(i',j')$ then $M(i, j') = M(i',j )$.(intercalacy)
	\end{enumerate}

	An intercalate matrix M is consistently signed if there exist $\epsilon_{ij} = \pm 1$ such that
	$\epsilon_{ij} \epsilon_{ij'} \epsilon_{i'j} \epsilon_{i'j'} = -1$ 
	whenever $M(i, j ) = M(i',j')$ and $i \ne i'$ and $j \ne j'$.

	It is well known that there exists an $[r,s,n]_\Z$ formula if and only if there is a consistently signed
	$r\times s$ intercalate matrix with $n$ colors. 
	
	Given a consistently signed intercalate matrix $M$, 
	the corresponding normed bilinear map $f=(z_1,z_2,...,z_n)$ can be defined as follows:
	each $z_k$ is determined by color $k$, if color $k$ appears in position $(i,j)$ with a sign $c=\pm 1$,
    then $z_k$ has a term $cx_iy_j$. For example, if $M$ is the following matrix:
	\[
	M = \left[
	\begin{array}{rrrrrrrrrr}
	1 &  2 &  3 &  4  \\
	2 & -1 &  4 & -3  \\
	3 & -4 & -1 &  2  \\
	4 &  3 & -2 & -1  
	
	\end{array}
	\right]
	\]
	
	then the normed bilinear map is
	\[
	f(x, y) = \left(
	\begin{aligned}
	z_1 &= x_1y_1 - x_2y_2 - x_3y_3 - x_4y_4 \\
	z_2 &= x_1y_2 + x_2y_1 + x_3y_4 - x_4y_3 \\
	z_3 &= x_1y_3 - x_2y_4 + x_3y_1 + x_4y_2 \\
	z_4 &= x_1y_4 + x_2y_3 - x_3y_2 + x_4y_1	
	\end{aligned}
	\right)
	\]
	
	\begin{lemma}
		If $f=(z_1,z_2,...,z_n)$ is the normed bilinear map of an $[r, s, n]_\Z$ formula, then every term $x_iy_j$ of the map $f$ has coefficient $\pm 1$ and only appears in one $z_k$.
	\end{lemma}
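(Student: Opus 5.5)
Write each component as $z_k = \sum_{i,j} a^{(k)}_{ij} x_i y_j$ with $a^{(k)}_{ij} \in \Z$. The plan is to compare the coefficient of the monomial $x_i^2 y_j^2$ on both sides of the defining identity
\[
(x_1^2 + \cdots + x_r^2)(y_1^2 + \cdots + y_s^2) = z_1^2 + \cdots + z_n^2 .
\]
On the left this coefficient is exactly $1$ for every pair $(i,j)$.

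On the right, expand $z_k^2 = \sum_{i,j,i',j'} a^{(k)}_{ij} a^{(k)}_{i'j'} x_i x_{i'} y_j y_{j'}$. The monomial $x_i^2 y_j^2$ arises only from the choice $i'=i$, $j'=j$, since $x_ix_{i'}y_jy_{j'} = x_i^2y_j^2$ forces $\{i,i'\}=\{i\}$ and $\{j,j'\}=\{j\}$. Hence its coefficient in $z_k^2$ is $(a^{(k)}_{ij})^2$. Equating coefficients, which is legitimate because this is a polynomial identity in independent indeterminates, gives
\[
\sum_{k=1}^n \bigl(a^{(k)}_{ij}\bigr)^2 = 1 .
\]

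Now use integrality. Each $(a^{(k)}_{ij})^2$ is a nonnegative integer, and these integers sum to $1$. So exactly one index $k$ has $(a^{(k)}_{ij})^2 = 1$, and all the others are $0$. This says $x_iy_j$ appears, with coefficient $\pm 1$, in exactly one $z_k$, which is the claim.

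There is no real obstacle here. The only point needing care is the monomial bookkeeping: cross terms with $i\ne i'$ or $j\ne j'$ never contribute to $x_i^2y_j^2$. Over $\R$ the statement would be false, since the squares could split as fractions, so integrality is exactly what is used.
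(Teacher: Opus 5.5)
Your proof is correct and is essentially the paper's argument. The paper evaluates the identity at the basis vectors $x=e_i$, $y=e_j$, which gives exactly your equation $\sum_k \bigl(a^{(k)}_{ij}\bigr)^2 = 1$ (you get it by comparing coefficients of $x_i^2y_j^2$), and then uses integrality in the same way; your version also makes explicit that some $z_k$ must contain $x_iy_j$, which the paper leaves implicit.
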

	\begin{proof}
		Pick two vectors $x=(0,…,a_i,0,…,0)\in \R^r$, $y=(0,…,b_j,0,…,0)\in \R^s$, 
		where $a_i=1$ at the ith position and $b_j=1$ at the jth position. 
		Then $1=|x|^2|y|^2=|f(x,y)|^2=z_1(x,y)^2+...+z_n(x,y)^2$, since $f$ has integer coefficients, 
		if $x_iy_j$ appears in $z_k$, then $z_k(x,y)^2= c^2\ge 1$, where $c$ is the coefficient of $x_iy_j$, so $c=\pm 1$ and all other $z_k(x,y)^2=0$,
		if follows that $x_iy_j$ has coefficient $\pm 1$ and only appears in one $z_k$.
	 
	\end{proof}	
	
	A classical result of Hurwitz and Radon states that an $[r, n, n]_\R$ formula exists if and only if $r \leq \rho(n)$, 
	where $\rho(n)$ is the Hurwitz$-$Radon function deﬁned as follows: if $n = 2^{4a+b}n_0$ where 
	$n_0$ is odd and $0 \leq b \leq 3$, then $\rho(n) = 8a + 2b$. A $[\rho(n), n, n]_\R$ formula is called a Hurwitz$-$Radon formula.
	From [2] and [4], we know that for every $[r, s, n]_\R$ with $n - r \leq 5$, 
	there is a formula built from the classical Hurwitz$-$Radon formulas 
	by a process of restrictions and direct sums. A natural question is whether the statement 
	remains true for $n-r>5$, it is difficult to answer. Even for the smallest case $[10,10,16]$, 
	we know there exists a $[10,10,16]_\Z$ formula that is unique and not a direct sum of 
	any other formulas, however, we do not know if it is a restriction of some 
	Hurwitz$-$Radon formulas, Shapiro conjectured in [4] that no $[10,10,16]_\Z$ formula can be a 
	restriction of a Hurwitz$-$Radon formula in 2000. In this paper, we are going to prove 
	this conjecture. Hence the $[10,10,16]_\Z$ formula is the first example that cannot be built 
	from the classical Hurwitz$-$Radon formulas by a process of restrictions and direct sums. 
	
	\section{Restrictions of Hurwitz$-$Radon formulas}
		Let $F$ be the normed bilinear map of a Hurwitz$-$Radon formula $[h,m,m]_\R$, a restriction of 
	the Hurwitz$-$Radon formula is an $[r,s,n]_\R$ formula with $r \leq h, s \leq m, n \leq m$ 
	such that there exist subspaces $X\subseteq\mathbb R^h, Y\subseteq\mathbb R^m$, and $Z\subseteq\mathbb R^m$ of dimensions 
	$r,s$, and $n$, respectively, where $Z$ contains the 
	image of $X \times Y$ under F, and the normed bilinear map of the $[r,s,n]_\R$ formula is the restriction 
	of $F$ on $X \times Y$, namely, the normed bilinear map of the $[r,s,n]_\R$ formula $f: X \times Y \rightarrow Z$ 
	is defined as $f(x,y)=F(x,y)$.
	
	If a restriction $[r,s,n]_\R$ formula is equivalent to an integral formula $[r,s,n]_\Z$ with bilinear map $g$, 
	then there are isometries 
	$\alpha :\R^r \rightarrow X, \beta :\R^s \rightarrow Y, \gamma : \R^n \rightarrow Z$
	such that the following diagram is commutative: 
	\[
	\xymatrix{
	\R^r \times \R^s \ar[r]^{g} \ar[d]_{\alpha \times \beta} & \R^n \ar[d]^{\gamma } \\
	X \times Y \ar[r]^{f} & Z
	}
	\]

	Images of standard orthonormal bases of $\R^r$, $\R^s$ and $\R^n$ are orthonormal bases of 
	$X$,$Y$ and $Z$. Clearly, these bases can be extended to orthonormal bases for 
	$\R^h$, $\R^m$ and $\R^m$ so that the first r-dimensional subspace of $\R^h$ is $X$, 
	the first s-dimensional subspace of $\R^m$ is $Y$ and the first n-dimensional subspace of 
	$\R^m$ is $Z$. The restriction $F$ on $X \times Y$ is the normed bilinear map $g$.
	
	We use $x=(x_1,…,x_r,…,x_h)$, $y=(y_1,…,y_s, …,y_m)$ and $z=(z_1,…,z_n,…,z_m)$ 
	as vectors of $\R^h$, $\R^m$ and $\R^m$, where $x=(x_1,…,x_r)$, $y=(y_1,…,y_s)$ and $z=(z_1,…,z_n)$ 
	as vectors of $X$, $Y$ and $Z$ and every $z_k=z_k(x,y)$ is a bilinear form and the normed bilinear map $f$ 
	can be expressed as $f(x,y)=g(x,y)=(z_1,...,z_n)$ by removing all the terms of $cx_iy_j$ with $i>r$ or $j>s$. 
	
	We say $x_iy_j$ appears in $z_k$ if $z_k$ has a term $cx_iy_j$ with $c\ne 0$. 
			
	\begin{lemma}
		Bilinear forms $z_1$,…, $z_n$ have the following properties: 
		
		\textnormal{(1)}	If $x_iy_j$ with $i \leq r$, $j \leq s$ appears in some $z_k$, then $x_iy_j$ does not appear in any other $z_{k'}$.
		
		\textnormal{(2)}	If $x_iy_j$ with $i\leq r, j\leq s$ and $x_{i’}y_{j’}$ with $i’\leq r$ or $j’\leq s$
		appear in $z_k$, then $x_iy_{j’}$ and $x_{i’}y_j$ must appear in a unique $z_{k'}$. Further more, 
		if coefficients of $x_iy_j, x_{i’}y_{j’}, x_iy_{j’}, x_{i’}y_j$ are $c_{ij},c_{i'j'},c_{ij'},c_{i'j}$ in $z_k$ and $z_{k'}$, 
		then $c_{ij}c_{i'j'}=-c_{ij'}c_{i'j}$.
		
		\textnormal{(3)}	Every $z_k$ has a term $x_iy_j$ for every $x_i$. 

	\end{lemma}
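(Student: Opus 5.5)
The plan is to translate everything into the matrix form of the Hurwitz$-$Radon formula. After the orthonormal changes of basis described above, write $F(x,y)=A(x)y$ with $A(x)=\sum_{i=1}^h x_iA_i$, where each $A_i$ is an $m\times m$ real matrix and $c^k_{ij}=\langle F(e_i,e_j),e_k\rangle$ is the coefficient of $x_iy_j$ in $z_k$. Since the formula is of square size $[h,m,m]$, the norm condition gives the Hurwitz matrix equations
\[
A_i^TA_{i'}+A_{i'}^TA_i=2\delta_{ii'}I .
\]
So each $A_i$ is orthogonal, and $A_i^TA_{i'}$ is skew-symmetric for $i\ne i'$. Polarizing $|F(x,y)|^2=|x|^2|y|^2$ in both variables gives
\[
\langle F(e_i,e_j),F(e_{i'},e_{j'})\rangle+\langle F(e_{i'},e_j),F(e_i,e_{j'})\rangle=2\delta_{ii'}\delta_{jj'} .
\]
By the commutative diagram, for $i\le r$, $j\le s$, $k\le n$ the coefficient $c^k_{ij}$ is exactly the coefficient in the integral formula $g$. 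Lemma 1 therefore applies to these coefficients.

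For (1), take $i\le r$, $j\le s$. The vector $F(e_i,e_j)$ is a unit vector. By Lemma 1 its coordinates with $k\le n$ consist of a single $\pm1$ and zeros, so these coordinates already have norm $1$. Hence every coordinate with $k>n$ vanishes, and $F(e_i,e_j)=\pm e_k$ for a single $k$. Thus $x_iy_j$ appears in no other $z_{k'}$, including those with $k'>n$.

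For (3), $A_i$ is orthogonal, hence invertible, so its $k$-th row is nonzero. That row lists the coefficients of $x_iy_1,\dots,x_iy_m$ in $z_k$, so $z_k$ contains some $x_iy_j$.

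For (2), set $a=F(e_i,e_j)=c\,e_k$ with $c=\pm1$ by (1), $d=F(e_{i'},e_{j'})$, $b=F(e_i,e_{j'})$ and $u=F(e_{i'},e_j)$.
\begin{enumerate}
\item First rule out degenerate cases. If $i=i'$ (with $j\ne j'$), orthogonality of the columns of $A_i$ gives $\langle a,d\rangle=0$, i.e. $c^k_{ij'}=0$, contradicting the hypothesis. The case $j=j'$ is symmetric: $x\mapsto F(x,e_j)$ is an isometry for the unit vector $e_j$. So we may assume $i\ne i'$ and $j\ne j'$.
\item The polarization identity then reads $c\,c^k_{i'j'}=\langle a,d\rangle=-\langle b,u\rangle$, and this is nonzero.
\item By symmetry assume $i'\le r$; the case $j'\le s$ is symmetric, using $b$ instead of $u$. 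Then (1) gives $u=\pm e_{k'}$ with $k'\ne k$, since $u\perp a$.
\item Consequently $\langle b,u\rangle\ne0$ says exactly that $x_iy_{j'}$ appears in $z_{k'}$, alongside $x_{i'}y_j$.
\end{enumerate}

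The main obstacle is showing that $x_iy_{j'}$ appears only in $z_{k'}$, i.e. $b=\pm e_{k'}$, together with $|c^k_{i'j'}|=1$. Once $a,b,u,d$ are all signed basis vectors, the sign relation $c_{ij}c_{i'j'}=-c_{ij'}c_{i'j}$ follows immediately from the polarization identity.

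When both $i',j'$ are within range, (1) gives all of this at once. The hard case is $j'>s$. There I would write $S=A_i^TA_{i'}$, which is orthogonal and skew, so that $F(e_{i'},y)=A_iSy$. Then $u=A_iSe_j=\pm e_{k'}$ and $a=A_ie_j=ce_k$ together force $A_i^T e_{k'}=\pm Se_j$. Because $S$ is skew and orthogonal, $S^2=-I$, and the plane spanned by $e_j$ and $Se_j$ is $S$-invariant. I would try to prove $e_{j'}\in\operatorname{span}(e_j,Se_j)$: since $\langle e_j,Se_{j'}\rangle=\pm\langle a,d\rangle\neq0$, $Se_{j'}$ has a nonzero $e_j$-component, hence $e_{j'}=-S(Se_{j'})$ has a nonzero component along $Se_j$. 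To upgrade this to equality, I would use that in the orthonormal coordinates of $Y$ the vector $Se_j$ is itself a standard basis vector $\pm e_{j''}$. This holds because $A_i$ maps $e_j\mapsto\pm e_k$ and must map the coordinate structure compatibly. From this, $j''=j'$ and $b=A_ie_{j'}=\pm A_iSe_j\cdot(\pm1)$ is a signed basis vector. If that coordinate argument fails, I would fall back on proving only the weaker reading of "unique" — that $x_iy_{j'}$ and $x_{i'}y_j$ share the same $z_{k'}$ — which steps 2–4 already give.
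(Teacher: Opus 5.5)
Your arguments for (1) and (3), and steps 1--4 for (2), are correct and follow the paper. Your polarization identity is the paper's observation that the coefficient of $x_iy_jx_{i'}y_{j'}$ in $\sum_k z_k^2$ must vanish. Your treatment of the degenerate cases $i=i'$ or $j=j'$ fills a small hole the paper leaves open.

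The flaw is in your last two paragraphs. You say the sign relation is available only once $a,b,u,d$ are all signed basis vectors, and you set out to prove $b=\pm e_{k'}$ and $|c^k_{i'j'}|=1$. Neither is needed. In the case $i'\le r$ you already have $a=c\,e_k$ and $u=\pm e_{k'}$. So $\langle a,d\rangle=c_{ij}c^{k}_{i'j'}$ and $\langle b,u\rangle=c^{k'}_{ij'}c^{k'}_{i'j}$, whatever the remaining coordinates of $b$ and $d$ are. Your step 2 then reads exactly $c_{ij}c_{i'j'}=-c_{ij'}c_{i'j}$, with coefficients taken in $z_k$ and $z_{k'}$, which is what (2) asserts. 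For $j'\le s$, use $b=\pm e_{k'}$ instead. Likewise, ``unique'' in the lemma only means that exactly one $z_{k'}$ contains both terms. That is your fallback reading, and it is the one the paper proves.

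The stronger claim you are aiming at is in fact false. Your justification for it, that $Se_j$ is a standard basis vector because $A_i$ ``must map the coordinate structure compatibly'', is unfounded, since the orthonormal completion of the basis of $Y$ to a basis of $\R^m$ is arbitrary. Here is a counterexample:
\begin{itemize}
\item Let $F$ be quaternion multiplication $[4,4,4]$, with $X=Z=\mathrm{span}(1,\mathbf{i})$ and $Y=\mathrm{span}(1)$, so $g$ is $z_1=x_1y_1$, $z_2=x_2y_1$.
\item Complete the $y$-basis by $(\mathbf{i}+\mathbf{j})/\sqrt{2}$, $(\mathbf{i}-\mathbf{j})/\sqrt{2}$, $\mathbf{k}$, and keep $1,\mathbf{i},\mathbf{j},\mathbf{k}$ for $z$.
\item Then $x_2y_2$ appears in $z_1$ with coefficient $-1/\sqrt{2}$, while $x_1y_2$ appears in both $z_2$ and $z_3$ with coefficient $1/\sqrt{2}$.
\item In the new $y$-coordinates, $Se_1=(e_2+e_3)/\sqrt{2}$, which is not a basis vector.
\end{itemize}
Yet $c_{11}c_{22}=-c_{12}c_{21}$ still holds in $z_1,z_2$. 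So drop the ``hard case'' paragraph and read off the sign relation directly from steps 2--3.
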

	\begin{proof}
	(1) Since $i \leq r,j \leq s, x_iy_j$, by Lemma 1, it only appears in one $z_k$ with $k\leq n$ and has coefficient $\pm 1$.
	Let $a_i=b_j=1$, $x=(0,…,a_i,0,…,0)\in \R^h$, $y=(0,…,b_j,0,…,0)\in \R^m$, 
	then $1=|x|^2|y|^2=|F(x,y)|^2=z_1(x,y)^2+… +z_m(x,y)^2$, since $z_k(x,y)^2=1$, no other $z_{k'}(x,y)^2>0$, namely, $x_iy_j$ does not 
	appear in any other $z_k$.
	
	(2) $z_k^2$ contains a term $2c_{ij}c_{i'j'}x_iy_jx_{i’}y_{j’}$, by the formula identity, this term is cancelled 
	from terms of other $z_{k'}^2$. By (1), $x_iy_j$ appears only in $z_k$, hence, there must exist a $z_{k'}$ that contains 
	both $x_iy_{j’}$ and $x_{i’}y_j$. If $i’\leq r$, since $j\leq s$, $x_{i’}y_j$ appears only in one $z_{k'}$ by (1), 
	no other $z_{k''}$ contains these two terms. To cancel the term $2c_{ij}c_{i'j'}x_iy_jx_{i’}y_{j’}$, coefficients of 
	these terms must satisfy $c_{ij}c_{i'j'} = -c_{ij'}c_{i'j}$.
	If $j'<s$, we can prove the same result with a similar argument.

	(3) Let $x =(0,…,x_i,0,…,0)$ with $x_i=1$ be a basis vector in $\R^h$, since F is a normed bilinear map, 
	$F$ defines a map $F_x: \R^m \rightarrow \R^m$: $y \rightarrow F(x,y)$ that is isometric, it follows that 
	the image of $F_x$ has dimension $m$. Hence there must be some vector $y$, $F_x(y)$ is not zero in $z_k$. 	
	this means there exists some $y_j$ such that $x_iy_j$ appears in $z_k$.
		
	\end{proof}
			
	\vspace{2mm}	
	
	\section{Proof of the conjecture}
		For $[10,10,16]_\Z$, Yiu proved in [6] that every $[10,10,16]_\Z$ is equivalent to the formula with the following consistently signed intercalate matrix:
		\vspace{2mm}
	    
	\[
	\left[
	\begin{array}{rrrrrrrrrr}
	1 &  2 &  3 &  4 &  5 &  6 &  7 &  8 &  9 &  10 \\
	2 & -1 &  4 & -3 &  6 & -5 & -8 &  7 & 10 &  -9 \\
	3 & -4 & -1 &  2 &  7 &  8 & -5 & -6 & 11 &  12 \\
	4 &  3 & -2 & -1 &  8 & -7 &  6 & -5 & 12 & -11 \\
	5 & -6 & -7 & -8 & -1 &  2 &  3 &  4 & 13 &  14 \\
	6 &  5 & -8 &  7 & -2 & -1 & -4 &  3 & 14 & -13 \\
	7 &  8 &  5 & -6 & -3 &  4 & -1 & -2 & 15 & -16 \\
	8 & -7 &  6 &  5 & -4 & -3 &  2 & -1 & 16 &  15 \\
	9 & -10& -11& -12& -13& -14& -15& -16& -1 &   2 \\
	10&  9 & -12&  11& -14&  13&  16& -15& -2 &  -1 
	\end{array}
	\right]
	\]

	Let $x=(x_1,x_2,x_3,x_4,x_5,x_6,x_7,x_8,x_9,x_{10})$, $y=(y_1,y_2,y_3,y_4,y_5,y_6,y_7,y_8,y_9,y_{10})$ and $z=(z_1,z_2,z_3 ,z_4,z_5,z_6,z_7,z_8,z_9,z_{10},z_{11},z_{12},z_{13},z_{14},z_{15},z_{16})$ be points in $R^{10}$, $R^{10}$ and $R^{16}$.
	Then the corresponding bilinear map $f: \R^{10} \times \R^{10} \rightarrow \R^{16}$ defined as 
	\vspace{4mm}	
    
	\[
	f(x, y) = \left(
	\begin{aligned}
	z_1 &= x_1y_1 - x_2y_2 - x_3y_3 - x_4y_4 - x_5y_5 - x_6y_6 - x_7y_7 - x_8y_8 - x_9y_9 - x_{10}y_{10} \\
	z_2 &= x_1y_2 + x_2y_1 + x_3y_4 - x_4y_3 + x_5y_6 - x_6y_5 - x_7y_8 + x_8y_7 + x_9y_{10} - x_{10}y_9 \\
	z_3 &= x_1y_3 - x_2y_4 + x_3y_1 + x_4y_2 + x_5y_7 + x_6y_8 - x_7y_5 - x_8y_6 \\
	z_4 &= x_1y_4 + x_2y_3 - x_3y_2 + x_4y_1 + x_5y_8 - x_6y_7 + x_7y_6 - x_8y_5 \\
	z_5 &= x_1y_5 - x_2y_6 - x_3y_7 - x_4y_8 + x_5y_1 + x_6y_2 + x_7y_3 + x_8y_4 \\
	z_6 &= x_1y_6 + x_2y_5 - x_3y_8 + x_4y_7 - x_5y_2 + x_6y_1 - x_7y_4 + x_8y_3 \\
	z_7 &= x_1y_7 + x_2y_8 + x_3y_5 - x_4y_6 - x_5y_3 + x_6y_4 + x_7y_1 - x_8y_2 \\
	z_8 &= x_1y_8 - x_2y_7 + x_3y_6 + x_4y_5 - x_5y_4 - x_6y_3 + x_7y_2 + x_8y_1 \\
	z_9 &= x_1y_9 - x_2y_{10} + x_9y_1 + x_{10}y_2 \\
	z_{10} &= x_1y_{10} + x_2y_9 - x_9y_2 + x_{10}y_1 \\
	z_{11} &= x_3y_9 - x_4y_{10} - x_9y_3 + x_{10}y_4 \\
	z_{12} &= x_3y_{10} + x_4y_9 - x_9y_4 - x_{10}y_3 \\
	z_{13} &= x_5y_9 - x_6y_{10} - x_9y_5 + x_{10}y_6 \\
	z_{14} &= x_5y_{10} + x_6y_9 - x_9y_6 - x_{10}y_5 \\
	z_{15} &= x_7y_9 + x_8y_{10} - x_9y_7 - x_{10}y_8 \\
	z_{16} &= -x_7y_{10} + x_8y_9 - x_9y_8 + x_{10}y_7 
	\end{aligned}
	\right)
	\]
	
	\begin{theorem} The normed bilinear map of a $[10,10,16]_\Z$ formula is not a 
	restriction of any normed bilinear map of size $[h,m,m]$.
	\end{theorem}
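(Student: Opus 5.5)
Assume, for a contradiction, that the $[10,10,16]_\Z$ formula above is a restriction of a normed bilinear map $F$ of size $[h,m,m]$. Using the discussion of Section 3, we may choose orthonormal coordinates so that $X,Y,Z$ are the first $10$, $10$ and $16$ coordinate subspaces. Then $F$ is written as $(z_1,\dots,z_m)$, where each $z_k$ is a bilinear form in $x_1,\dots,x_h$ and $y_1,\dots,y_m$. For $k\le 16$, deleting from $z_k$ every term $x_iy_j$ with $i>10$ or $j>10$ recovers the displayed $z_k$. The plan is to use Lemma 2 repeatedly to force hidden terms into the short forms $z_9,\dots,z_{16}$ until an inconsistency appears.

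\textbf{Step 1: a hidden term.} The form $z_9$ contains only $x_1,x_2,x_9,x_{10}$ among the visible variables. By Lemma 2(3), $z_9$ must contain a term $x_3y_j$ for some $j$. We have $j>10$, because $x_3y_j$ with $j\le 10$ already lies in another $z_{k'}$ (it sits in $z_3,z_4,z_1,z_2,z_7,z_8,z_5,z_6,z_{11},z_{12}$), and Lemma 2(1) forbids it appearing twice.

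\textbf{Step 2: propagate with Lemma 2(2).} Write $z_9\ni c\,x_3y_j$ with $j>10$. Pair this term with the visible term $x_1y_9$ of $z_9$. Since $i=1\le r$ and $i'=3\le r$, Lemma 2(2) applies: the terms $x_1y_j$ and $x_3y_9$ must lie together in a single $z_{k'}$. But $x_3y_9$ is visible and lies only in $z_{11}$, so $z_{11}\ni x_1y_j$ with the sign fixed by $c_{19}c_{3j}=-c_{1j}c_{39}$. We repeat this pairing with the other visible terms of $z_9$:
\begin{itemize}
\item pairing with $x_2y_{10}$ puts $x_2y_j$ into the form containing $x_3y_{10}$, namely $z_{12}$;
\item pairing with $x_9y_1$ puts $x_9y_j$ into the form containing $x_3y_1$, namely $z_3$;
\item pairing with $x_{10}y_2$ puts $x_{10}y_j$ into the form containing $x_3y_2$, namely $z_4$.
\end{itemize}
Each use of Lemma 2(2) also records a sign relation. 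We then iterate: the new hidden terms (for example $x_1y_j$ in $z_{11}$) are paired with the visible terms of their host form (for example $x_3y_9$, $x_4y_{10}$, $x_9y_3$, $x_{10}y_4$ in $z_{11}$). This yields further forced memberships, all of the form $x_iy_j$ in a determined $z_k$, and further sign equations.

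\textbf{Step 3: find the contradiction.} The aim is to reach either a term $x_iy_j$ forced into two different forms, contradicting Lemma 2(1), or a closed cycle of sign relations whose product gives $1=-1$. I expect the contradiction to come from the signs, since the intercalacy pattern of the visible matrix is itself consistent. Concretely, I would trace $x_ay_j$ around a quadrilateral of forms $z_9\to z_{11}\to z_3\to\cdots\to z_9$ and multiply the four relations $c\,c'=-c''c'''$. The product of the visible signs is read off the matrix, and the hidden signs cancel in pairs. Using the $8\times 8$ octonion-like block in the upper left against the $9,10$ columns, this should give an odd number of minus signs.

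The main obstacle is choosing the right cycle and handling the freedom in $j$. Different starting terms might land on different hidden indices $j$, so I must check that the chain of deductions stays on a single hidden column $y_j$. This holds because Lemma 2(2) always preserves the hidden index $j$. If the sign cycle turns out consistent, I would fall back on applying Lemma 2(3) to $z_{13}$ or $z_{15}$ with variables from the other $4\times 4$ block. I would then combine the resulting hidden columns, using the orthogonality of $F_x$ to obtain the conflict.
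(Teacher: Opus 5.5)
Your overall strategy matches the paper's: force a hidden term with Lemma 2(3), then push it around with Lemma 2(2) along one hidden column $y_j$, and look for a sign clash. You are right that Lemma 2(2) preserves the hidden index, and your Steps 1 and 2 are correct.

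The gap is Step 3, which is where the whole proof lives. You only predict that some cycle ``should give an odd number of minus signs,'' and the cycle you indicate does not give one. After $z_9\to z_{11}\to z_3$, the only way to return to your starting term $c\,x_3y_j$ in four steps is the quadrilateral $z_9\to z_{11}\to z_3\to z_{12}\to z_9$, and that cycle is sign-consistent:
\begin{itemize}
\item you get $-c\,x_1y_j\in z_{11}$, then $-c\,x_9y_j\in z_3$, then $c\,x_2y_j\in z_{12}$;
\item closing up through $x_3y_{10}\in z_{12}$ and $-x_2y_{10}\in z_9$ returns exactly $c\,x_3y_j$.
\end{itemize}
So nothing in the proposal actually yields $c=0$. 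The fallback (``use the orthogonality of $F_x$'') is not an argument.

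The missing step can be completed with what you already have.
\begin{itemize}
\item Pairing $c\,x_3y_j$ with $x_9y_1$ gives $-c\,x_9y_j\in z_3$.
\item Pairing it with $x_{10}y_2$ gives $c\,x_{10}y_j\in z_4$, because $x_3y_2$ enters $z_4$ with coefficient $-1$.
\item Pair $-c\,x_9y_j$ with $x_5y_7\in z_3$. Then $x_5y_j$ must sit with $-x_9y_7$ in $z_{15}$, and $c_{57}c_{9j}=-c_{5j}c_{97}$ gives $c_{5j}=-c$.
\item Pair $c\,x_{10}y_j$ with $x_5y_8\in z_4$. Then $x_5y_j$ must sit with $-x_{10}y_8$, again in $z_{15}$, and $c_{58}c_{10,j}=-c_{5j}c_{10,8}$ gives $c_{5j}=c$.
\end{itemize}
Hence $c=0$, contradicting Step 1. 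The inconsistent cycle is $z_9\to z_3\to z_{15}\to z_4\to z_9$. It passes through the second half $x_5,\dots,x_8$ of the octonion block, which your quadrilateral never touches.

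The paper runs essentially the same cycle. It starts from $x_9y_{11}\in z_3$, goes $z_3\to z_{11}\to z_4$, and then re-propagates through all of $z_4$. Be aware that its last table appears to use $+c$ where it had derived $-c$ for $x_{10}y_{11}$ in $z_4$. The $z_{11}$ entry is forced by one and the same relation in both passes, so the clash the paper reports there is spurious. The clash that genuinely survives is in $z_{13}$--$z_{16}$ (for example, $x_5y_{11}$ in $z_{15}$), which is exactly the one above.
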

	\begin{proof} Let $F=(z_1,z_2,…,z_{16},z_{17},…,z_m)$ be normed bilinear map of 
	size $[h,m,m]$, if the normed bilinear map of a $[10,10,16]_\Z$ formula is a 
	restriction of $F$, as in the previous section, we can assume $F$ restricts to 
	$\R^{10} \times \R^{10}\rightarrow \R^{16}$ is the normed bilinear map defined above, 
	then every bilinear map $z_i$ of $[h,m,m]$ formula with $i \leq 16$ 
	contains terms listed in the above table with respect to $z_i$.
	
	By Lemma 2 (3), every bilinear map $z_i$ of $[h,m,m]$ formula has a term starting with $x_9$, 
	hence, for example, $z_3$ must have a term $cx_9y_j$ with $j>10$ because every $x_9y_j$ 
	with $j \leq 10$ must already appear in the above table. Without loss of generality, 
	we can assume $y_j=y_{11}$, then $z_3$ of the restriction of $F$ on $\R^{10} \times \R^{11}$, 
	$z_3$ has a term $cx_9y_{11}$. Since $x_1y_3$ is in $z_3$ and does not appear in any other $z_k$, 
	so $x_1y_{11}$ and $x_9y_3$ appear in a unique $z_k$ by Lemma 2 (2).  From the above table, 
	$x_9y_3$ appear in $z_{11}$ and does not appear in any other $z_i$ by Lemma 2 (1), it follows 
	that $x_1y_{11}$ must appear in $z_{11}$ as well and the coefficient is also $c$. Similarly, 
	since $-x_2y_3$ is in $z_3, -x_2y_{11}$ must appear in $z_{12}$, similarly for other terms in $z_3$
	as well, we can show that $z_9,z_{10},…,z_{16}$ have terms: 
	$-cx_3y_{11}, cx_4y_{11}, cx_1y_{11}, -cx_2y_{11}, -cx_7y_{11}, -cx_8y_{11}, cx_5y_{11}, cx_6y_{11}$ 
	respectively:
	\vspace{4mm}	
		
	\begin{tabular}{ll}		
		$z_9 = x_1y_9	-x_2y_{10}	+x_9y_1	+x_{10}y_2	-cx_3y_{11}	$				\\
		$z_{10} = x_1y_{10}	+x_2y_9	-x_9y_2	+x_{10}y_1	+cx_4y_{11}$				\\	
		$z_{11} = x_3y_9	-x_4y_{10}	-x_9y_3	+x_{10}y_4	+cx_1y_{11}	$				\\
		$z_{12} = x_3y_{10}	+x_4y_9	-x_9y_4	-x_{10}y_3	-cx_2y_{11}$				\\	
		$z_{13} = x_5y_9	-x_6y_{10}	-x_9y_5	+x_{10}y_6	-cx_7y_{11}$				\\
		$z_{14} = x_5y_{10}	+x_6y_9	-x_9y_6	-x_{10}y_5	-cx_8y_{11}$				\\	
		$z_{15} = x_7y_9	+x_8y_{10}	-x_9y_7	-x_{10}y_8	+cx_5y_{11}	$				\\
		$z_{16} = -x_7y_{10}	+x_8y_9	-x_9y_8	+x_{10}y_7	+cx_6y_{11}$

    \end{tabular}
    
	\vspace{4mm}	
	
	For every $z_k$ with $k\geq 9$, there is a new term $x_iy_{11}$, by the above argument, some new terms should
	appear in other $z_k$. For example, for $z_{11}$, we have the following:
	\vspace{4mm}
		
	\begin{tabular}{ll}		
		$z_{3} = x_1y_3	-x_2y_4	+x_3y_1	+x_4y_2	+x_5y_7	+x_6y_8	-x_7y_5	-x_8y_6	+cx_9y_{11}	$				\\
		$z_{4} = x_1y_4	+x_2y_3	-x_3y_2	+x_4y_1	+x_5y_8	-x_6y_7	+x_7y_6	-x_8y_5	-cx_{10}y_{11}$				\\
		$z_9 = x_1y_9	-x_2y_{10}	+x_9y_1	+x_{10}y_2	-cx_3y_{11}	$				\\
		$z_{10} = x_1y_{10}	+x_2y_9	-x_9y_2	+x_{10}y_1	+cx_4y_{11}$					
			
    \end{tabular}
    
	\vspace{4mm}
	
	Now, $z_{4}$ has a term $x_{10}y_{11}$, applying the above argument for $z_3$, we get the following:
	\vspace{4mm}
		
	\begin{tabular}{ll}		
		$z_{9} = x_1y_9	-x_2y_{10}	+x_9y_1	+x_{10}y_2	+cx_3y_{11}	$				\\
		$z_{10} = x_1y_{10}	+x_2y_9	-x_9y_2	+x_{10}y_1	-cx_4y_{11}$				\\
		$z_{11} = x_3y_9	-x_4y_{10}	-x_9y_3	+x_{10}y_4	-cx_1y_{11}	$				\\
		$z_{12} = x_3y_{10}	+x_4y_9	-x_9y_4	-x_{10}y_3	+cx_2y_{11}$				\\	
		$z_{13} = x_5y_9	-x_6y_{10}	-x_9y_5	+x_{10}y_6	-cx_7y_{11}	$				\\
		$z_{14} = x_5y_{10}	+x_6y_9	-x_9y_6	-x_{10}y_5	-cx_8y_{11}$				\\
		$z_{15} = x_7y_9	+x_8y_{10}	-x_9y_7	-x_{10}y_8	+cx_5y_{11}$				\\
		$z_{16} = -x_7y_{10}	+x_8y_9	-x_9y_8	+x_{10}y_7	+cx_6y_{11}$					
		
    \end{tabular}
    
	\vspace{4mm}	
	
	Notice that $z_{11}$ that has a term $-cx_1y_{11}$, but $z_{11}$ has a term $cx_1y_{11}$ in the previous table. 
	This sign difference means $c=0$ that is a contradiction. This completes the proof.
	\end{proof}
	\vspace{4mm}
	
	Since every Hurwitz$-$Radon formula $[h,m,m]_\R$ or $[h,m,m]_\Z$ has a normed bilinear map, Theorem 1 
	implies the following:
	\begin{corollary}
	No $[10,10,16]_\Z$ formula is a restriction of any Hurwitz$-$Radon formula.
	\end{corollary}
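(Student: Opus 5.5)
Since the corollary is an immediate consequence of Theorem 1, I will just explain how it follows. I will carry the argument out at the level of normed bilinear maps, so the proof should not depend on whether the Hurwitz$-$Radon formula is over $\R$ or $\Z$.

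First I unwind the definition of restriction from Section 3. Suppose some $[10,10,16]_\Z$ formula were a restriction of a Hurwitz$-$Radon formula $[\rho(m),m,m]_\R$. Let $F:\R^{\rho(m)}\times\R^m\to\R^m$ be the normed bilinear map of that formula. Then there would be subspaces $X,Y,Z$ of dimensions $10,10,16$, with $F(X\times Y)\subseteq Z$, such that $F|_{X\times Y}$ is equivalent to the integral map $g$ of the formula. By Yiu's uniqueness result quoted in Section 4, we may assume $g$ is the explicit map $f$ written out there. We then choose orthonormal bases as in Section 3, taking the images of the standard bases under $\alpha,\beta,\gamma$ and extending them. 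With these choices $F$ is a normed bilinear map of size $[h,m,m]$ with $h=\rho(m)$, and its restriction to the first $10\times 10$ coordinates, landing in the first $16$ coordinates, is exactly $f$.

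Second, I observe that this is precisely the configuration excluded by Theorem 1. Theorem 1 applies to any normed bilinear map of size $[h,m,m]$; it does not need $h$ to be maximal, and it does not need integer coefficients outside the $10\times 10$ block. So the assumed restriction contradicts Theorem 1, and the corollary follows.

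The one point that needs care is that the reduction is legitimate. Changing bases by isometries of $\R^h$ and $\R^m$ preserves the normed property. It also preserves the size $[h,m,m]$. Hence the new $F$ is still an $[h,m,m]$ normed bilinear map, and Theorem 1 can be applied to it. I expect no real obstacle here; the substantive work, the sign contradiction on the coefficient of $x_1y_{11}$ in $z_{11}$, is already done in Theorem 1.
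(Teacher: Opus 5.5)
Your reduction is correct and is the paper's own argument: the corollary is Theorem 1 with $h=\rho(m)$, once Yiu's uniqueness and the basis extension of Section 3 put the restriction into the normal form that Theorem 1 assumes. One correction to your closing aside: the contradiction does not lie in the coefficient of $x_1y_{11}$ in $z_{11}$, because the monomial $x_1x_{10}y_4y_{11}$ that produced $-c\,x_{10}y_{11}$ in $z_4$ simply returns $+c\,x_1y_{11}$ (the paper's third table has all eight signs reversed); a genuine clash does occur in $z_{15}$, where $x_5x_9y_7y_{11}$ forces $+c\,x_5y_{11}$ while $x_5x_{10}y_8y_{11}$ forces $-c\,x_5y_{11}$, so Theorem 1, and hence your corollary, still stand.
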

	
	\section{Summary}
			We proved the conjecture of D. B. Shapiro, hence, not every $[r,s,n]_\R$ or $[r,s,n]_\Z$ formula can be
	    built from Hurwitz$-$Radon formulas by a process of restrictions and direct sums. 
		The $[10,10,16]_\Z$ formula has a very interesting structure, it is not just the smallest formula that is not a restriction 
		of any Hurwitz$-$Radon formulas but also the largest case for $ r*r = r\circ r$, where $r\circ r$
		is the smallest integer $n$ such that there exists an $r\times r$ intercalate matrix with $n$ colors.
		
		For a consistently signed intercalate matrix, a submatrix is also a consistently signed intercalate matrix and its 
		normed bilinear map is a restriction of the original normed bilinear map. Hence, by Theorem 1,
		a consistently signed intercalate matrix of a Hurwitz$-$Radon formula $[h,m,m]_\Z$ 
		has no submatrices of type $[10,10,16]_\Z$.
		
		It is also not difficult to create more $[r,s,n]_\Z$ formulas with $n-s>6$ that is not a restriction of 
		any Hurwitz$-$Radon formula by extending $[10,10,16]_\Z$ formula. For example, the consistently signed intercalate 
		matrix of a $[12,12,26]_\Z$ formula in [5, (7.2)] has a submatrix of type $[10,10,16]_\Z$, 
		so it cannot be a restriction of any Hurwitz$-$Radon formula.
	
	\section*{Acknowledgements}

	The author would like to thank Professor K. Y. Lam for his valuable comments and encouragement.

\end{document}